\newtheorem{Theorem}{Theorem}[section]
\newtheorem{Corollary}[Theorem]{Corollary}
\newtheorem{Proposition}[Theorem]{Proposition}
\newtheorem{Lemma}[Theorem]{Lemma}
\theoremstyle{definition}
\newtheorem{Definition}[Theorem]{Definition}
\newtheorem{Example}[Theorem]{Example}
\theoremstyle{remark}
\newtheorem{Remark}[Theorem]{Remark}
\newcommand{\abs}[1]{\left|#1\right|}
\newcommand{\rank}{\mathop{\mathrm{rk}}}
\newcommand{\st}{s.\,t.\ } %
\newcommand{\ie}{\textit{i.\,e.\ }} %
\newcommand{\eg}{\textit{e.\,g.\ }} %
\newcommand{\K}{\mathbb{K}}
\newcommand{\Pcal}{\mathcal{P}}
\newcommand{\spa}{\mathop{\mathrm{span}}}
\newcommand{\hilb}{\mathop{\mathrm{Hilb}}}
\numberwithin{equation}{section}
\begin{document}

\date{\today}
\title%
{%
 Matroids and log-concavity
}

\author%
{%
Matthias Lenz%
 }
\address{Merton College,
OX1 4JD, Oxford, United Kingdom
}
 
 \thanks{This work was carried out while the author was at TU Berlin. He was supported by
 a Sofia Kovalevskaya Research Prize of Alexander von Humboldt Foundation awarded to Olga Holtz.}

\email{lenz@maths.ox.ac.uk}

\begin{abstract}
 We show that $f$-vectors of matroid complexes of realisable matroids are %
 log-concave. This was conjectured
 by Mason in 1972.   
 Our proof uses the recent result by Huh and Katz who showed that the 
 coefficients of the characteristic polynomial of a realisable matroid form a log-concave sequence.
We also discuss the relationship between log-concavity
 of $f$-vectors and $h$-vectors of matroids.
 In the last section we 
 explain the connection between zonotopal algebra and  $f$-vectors and characteristic polynomials of matroids.
\end{abstract}

\subjclass[2010]{%
Primary: 
05A20, %
05B35, %
 Secondary:
 05E45, %
 05C31.%
}

\keywords{matroid, Mason's conjecture, log-concave sequence, unimodal sequence, $f$-vector, $h$-vector, 
Tutte polynomial, zonotopal algebra.}
       
\maketitle

\section{Introduction}
Let $M=(E,\Delta)$ be a matroid of rank $r$. $E$ denotes the ground set and $\Delta \subseteq 2^E$
 denotes the matroid complex, \ie the abstract simplicial complex of independent sets. 
Let $f=(f_0,\ldots, f_r)$ be the $f$-vector of $\Delta$, \ie $f_i$ is the number of sets of cardinality $i$
 in $\Delta$. 
 Dominic Welsh conjectured in 1969 \cite{welsh-1971} that the $f$-vector of a matroid complex is \emph{unimodal}, \ie
  there exists  $j\in\{0,1,\ldots, r\}$ \st
 $f_0 \le f_1 \le \ldots   \le f_j \ge \ldots \ge f_r$. 
Three successive strengthenings of this conjecture were proposed by John Mason in 1972 \cite{mason-1972}.
 The weakest of them is \emph{log-concavity} of the $f$-vector, \ie
\begin{align}
   f_i^2 \ge f_{i-1}f_{i+1} \text{ for } i=1,\ldots, r - 1. \label{eq:logconcavity}
\end{align}
Since then, these conjectures have received considerable attention. See for example
   \cite{brown-colbourn-1994, cheng-masuyama-2010, dawson-1984,dowling-1980, hamidoune-salaun-1989, 
 johnson-kontoyiannis-madiman-2011,kahn-neiman-2011, mahoney-1985, seymour-1975, 
 stanley-1981,wagner-2008,zhao-1985}.
  Carolyn Mahoney proved log-concavity for cycle matroids of outerplanar graphs in 1985 \cite{mahoney-1985}.
David Wagner \cite{wagner-2008} describes further partial results, several stronger variants of Mason's conjecture, 
 and other sequences of integers that are associated with a matroid and that are conjectured to be log-concave.   
Log-concave sequences arising in combinatorics have been studied by many authors. For an overview, see the surveys by Francesco Brenti and Richard Stanley \cite{brenti-1994,stanley-1989}.

Recall that a matroid is \emph{realisable}
 if it is equivalent to a matroid 
 whose ground set is a list of vectors 
 in a vector space over some field $\K$ 
 and whose independent sets are the linearly independent subsets
 of this list. 
Our main result is the following theorem.
\begin{Theorem}
\label{Theorem:fvectorLogConcave}
 The $f$-vector of the matroid complex of a realisable matroid is %
 log-concave.
\end{Theorem}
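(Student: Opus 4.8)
The plan is to realise the $f$-vector of $M$ as the coefficient sequence, up to alternating signs, of the reduced characteristic polynomial $\bar\chi_{N}(t):=\chi_{N}(t)/(t-1)$ of a single auxiliary matroid $N$, chosen so that $N$ is realisable whenever $M$ is; Theorem~\ref{Theorem:fvectorLogConcave} then follows at once from the theorem of Huh and Katz. First I would reduce to the case that $M$ is loopless (deleting the loops of $M$ changes neither $\Delta$ nor $f$ and preserves realisability) and, since a matroid representable over some field is representable over some infinite field, that $M$ is realised over an infinite field $\K$; working over an infinite field is what will let us perform generic extensions. For $N$ I would take the \emph{free coextension} of $M$, that is, $N:=(M^{*}+p)^{*}$, where $M^{*}+p$ denotes the free extension of the dual matroid $M^{*}$ by a new element $p$. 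Since duality preserves realisability and free extension preserves realisability over an infinite field, $N$ is realisable over $\K$; moreover $N$ has rank $r+1$ and is loopless.

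The heart of the matter is the identity
\[
  \bar\chi_{N}(t)\;=\;\sum_{i=0}^{r}(-1)^{i}f_{i}\,t^{\,r-i},
\]
which I would establish using the deletion--contraction recursion for the characteristic polynomial. Dualising the standard facts that $(M^{*}+p)\setminus p=M^{*}$ and that $(M^{*}+p)/p$ is the truncation of $M^{*}$ identifies $N/p=M$ and $N\setminus p=L$, where $L$ is the rank-$(r+1)$ matroid on the ground set $E$ with rank function $S\mapsto\min\{\,\abs{S},\ \rank_{M}(S)+1\,\}$ (the elongation, or Higgs lift, of $M$). Unless $M$ is the free matroid --- a case that is checked directly --- the element $p$ is neither a loop nor a coloop of $N$, so $\chi_{N}=\chi_{N\setminus p}-\chi_{N/p}=\chi_{L}-\chi_{M}$. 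Now compare, for a fixed $S\subseteq E$, the monomial it contributes to $\chi_{L}$ with the one it contributes to $\chi_{M}$: if $S$ is dependent in $M$ then $\rank_{L}(S)=\rank_{M}(S)+1$, so the two monomials coincide and cancel in the difference; if $S$ is independent in $M$, say $\abs{S}=i$, then $\rank_{L}(S)=i$ and $S$ contributes $(-1)^{i}t^{\,r+1-i}-(-1)^{i}t^{\,r-i}=(-1)^{i}(t-1)t^{\,r-i}$. Summing over the independent sets gives $\chi_{N}(t)=(t-1)\sum_{i=0}^{r}(-1)^{i}f_{i}t^{\,r-i}$, which is the displayed identity after dividing by $t-1$.

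Granting the identity, the theorem of Huh and Katz applied to the realisable matroid $N$ says that the coefficient sequence of $\bar\chi_{N}$, which is exactly $(f_{0},f_{1},\dots,f_{r})$ up to signs, is log-concave; this is \eqref{eq:logconcavity} for $M$, proving the theorem. I expect the only real obstacle to be conceptual: the $f$-vector of $M$ is \emph{not} directly comparable to the characteristic polynomial of $M$ itself --- already for $U_{2,3}$ the sequence $(1,3,3)$ is not the coefficient sequence of any rank-$2$ characteristic polynomial --- so one has to hit on the free coextension and the elongation identity above. Once the right auxiliary matroid is in place, every remaining step is routine: the recursion is classical, the cancellation is a one-line computation, and the single point requiring care is the preservation of realisability, which is precisely why we arrange to work over an infinite field.
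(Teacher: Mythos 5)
Your proposal is correct and follows essentially the same route as the paper: your auxiliary matroid $N$ is exactly the free coextension $M\times e$, your displayed identity is (after the substitution $t=-q$) precisely Proposition~\ref{Proposition:IndependenceCharacteristic} (Brylawski's identity), and the conclusion is the same application of Theorem~\ref{Theorem:KatzHuh} together with the observation that realisability of the free coextension is preserved after passing to an infinite field. The only difference is that you derive the key identity by deletion--contraction at $p$, using $N/p=M$ and $N\setminus p=$ the elongation of $M$, whereas the paper computes $T_{M^*+e}(0,1+q)$ directly as a sum over subsets via Tutte-polynomial duality; both derivations are correct and routine.
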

This theorem follows from the log-concavity  
of the characteristic polynomial of a realisable matroid that was shown by June Huh and Eric Katz 
 and a connection between $f$-vectors and the characteristic polynomial that was first discovered by Tom Brylawski.

 The strongest of Mason's three conjectures \cite{mason-1972} %
 is ultra-log-concavity, \ie the conjecture that the following 
 inequalities hold:
\begin{equation}
 \frac{f_i^2}{\binom{f_1}{i}^2} \ge  \frac{f_{i-1}}{\binom{f_1}{i-1}}\frac{f_{i+1}}{\binom{f_1}{i+1}} 
 \text{ for } i=1,\ldots, r-1.
\end{equation}
This conjecture was one of the main topics of a workshop at AIM in 2011\footnote{Workshop on 
 \emph{Stability, hyperbolicity, and zero localization of functions}, 
 December 5 to December 9, 2011 
 at the 
American Institute of Mathematics, Palo Alto, California. Organised by 
 Petter Br\"and\'en, George Csordas, Olga Holtz, and Mikhail Tyaglov.\\
\url{http://www.aimath.org/ARCC/workshops/hyperbolicpoly.html}
}.

\smallskip
Finding inequalities satisfied by $f$-vectors of matroid complexes is interesting because it 
is a step towards the classification
of $f$-vectors and $h$-vectors of matroid complexes.
In this context, 
it is also interesting to know that the
convex hull of the set of $f$-vectors of matroid complexes on $N$
 elements is a simplex whose vertices are $f$-vectors of uniform matroids~\cite{kozlov-1997}.
Johnson, Kontoyiannis, and Madiman \cite{johnson-kontoyiannis-madiman-2011} show that 
a stronger version of Theorem~\ref{Theorem:fvectorLogConcave}
would 
imply a bound on the entropy of the cardinality of a random independent set in a matroid.
Our log-concavity results might also help to prove statements about coefficients and zeroes of various graph polynomials. 
The 
chromatic polynomial, the nowhere-zero flow polynomial, the 
 critical configuration polynomial, the shelling polynomial, and the reliability polynomial 
 are all related to the matroid polynomials studied in this article. 
  The connection can be made via the Tutte polynomial. See \cite{ellis-merino-2011} for  details.
Brown and Colbourn state  that our log-concavity results might have applications to the theory of 
network reliability \cite[p. 117]{brown-colbourn-1994}.
\subsection*{Organisation of the article}
In Section~\ref{Section:MatroidAndMatroidPolynomials} we introduce the $f$-polynomial and the 
characteristic polynomial of a matroid.
Recently,   Huh and  Katz  %
proved that the characteristic polynomial of a 
 realisable matroid is log-concave (a univariate polynomial is log-concave if its coefficients form a log-concave sequence).
 In Section~\ref{Section:Extensions} we establish a connection between the characteristic polynomial 
 and the $f$-polynomial.
In conjunction with the result by Katz and Huh,
 this implies log-concavity of the $f$-polynomial of realisable matroids.
 In Section~\ref{Section:Supplementary} we  discuss connections between (strict) log-concavity 
 of $h$-vectors and $f$-vectors and the matroid operation thickening.
In Section~\ref{Section:ZonotopalAlgebra} we give a brief introduction to zonotopal algebra and explain 
 how the $f$-polynomial 
 and the characteristic polynomial are related to it. 
 Zonotopal algebra is the theory of several classes of vector spaces of polynomials that can be 
 associated with a realisation of a matroid. 
 The Hilbert series of these spaces are matroid invariants.

\section{Matroid polynomials}
\label{Section:MatroidAndMatroidPolynomials}
In this section we review the definitions of some matroid polynomials. 
We assume that the reader is familiar with matroid theory. 
A good reference  is the book by James Oxley \cite{MatroidTheory-Oxley}.

Recall that we denote by $M=(E,\Delta)$ a matroid of rank $r$. 
Let $\rank$ denote the rank function of $M$.
The \emph{Tutte polynomial} \cite{brylawski-oxley-1992} of  $M$ is 
 defined as %
\begin{align}
T_M(x,y) &= \sum_{A\subseteq E}(x-1)^{r-\rank(A)} (y-1)^{\abs{A}- \rank(A)}. %
\end{align}
An important specialisation of the Tutte polynomial is the \emph{characteristic polynomial}
\begin{align}
\chi_M(q) &=  (-1)^{r}T_M(1-q,0) = \sum_{A\subseteq E} (-1)^{\abs{A}} q^{r - \rank(A)}.  
\end{align}
The \emph{reduced characteristic polynomial} is defined as
\begin{align}
 \bar\chi_M(q) = \frac{1}{q-1}\chi_M(q).
\end{align}
Note that since $E\neq \emptyset$, $\chi_M(q)$ vanishes for $q=1$, so $\bar\chi_M(q)$ is indeed a polynomial.
Huh and Katz proved the following theorem, extending an earlier theorem by Huh~\cite{huh-2012}.
\begin{Theorem}[\cite{huh-katz-2012}]
\label{Theorem:KatzHuh}
If $M$ is a realisable matroid, then the coefficients of its reduced characteristic polynomial $\bar\chi_M(q)$ form a log-concave
 sequence. 
\end{Theorem}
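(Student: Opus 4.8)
The plan is to reduce the statement to an inequality between intersection numbers on a projective variety and then invoke a Hodge-index-type inequality of Teissier and Khovanskii. Write $\bar\chi_M(q)=\sum_{i=0}^{r-1}(-1)^i\mu^i q^{r-1-i}$; the classical fact that the Whitney numbers of the first kind of a loopless matroid alternate strictly in sign shows that $\mu^i>0$ for all $i$ when $M$ is loopless, so ``log-concave'' here means log-concavity of the positive sequence $(\mu^0,\ldots,\mu^{r-1})$, and the claim is trivial when $M$ has a loop (then $\bar\chi_M\equiv 0$). Using the multiplicativity $\chi_{M_1\oplus M_2}=\chi_{M_1}\chi_{M_2}$, the invariance of $\bar\chi_M$ under parallel extensions, and the fact that a convolution of positive log-concave sequences is again log-concave, one reduces to the case that $M$ is simple and connected; and since passing to the algebraic closure changes neither $M$ nor $\chi_M$, we may assume $\K=\bar\K$.

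Fix a realisation, i.e.\ a list $(v_e)_{e\in E}$ of vectors spanning $\K^r$, equivalently a central essential arrangement $\mathcal{A}=(H_e)_{e\in E}$ of hyperplanes in $\mathbb{P}^{r-1}=\mathbb{P}(\K^r)$ with defining linear forms $\ell_e$. Associate to $\mathcal{A}$ a rational self-map $\varphi\colon\mathbb{P}^{r-1}\dashrightarrow\mathbb{P}^{r-1}$ of Cremona type, regular on the complement $U=\mathbb{P}^{r-1}\setminus\bigcup_e H_e$, built from the $\ell_e$ (in characteristic zero one may take the gradient map of $\prod_e\ell_e$), and let $X\subseteq\mathbb{P}^{r-1}\times\mathbb{P}^{r-1}$ be the closure of the graph of $\varphi|_U$. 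Since $U$ is irreducible of dimension $r-1$, so is $X$. Let $\alpha$ and $\beta$ be the pullbacks to $X$ of the hyperplane classes from the first and second factor; both are base-point-free, hence nef. The crux of the argument is the identity
\begin{equation}
  \mu^i \;=\; \alpha^{\,i}\cdot\beta^{\,r-1-i}\qquad(i=0,\ldots,r-1),
\end{equation}
that the coefficients of $\bar\chi_M$ are precisely the bidegrees of $X$. This is proved by a local computation at a generic point, counting preimages under $\varphi$ of a generic linear space of codimension $r-1-i$ inside a generic linear space of codimension $i$ through the source, organised by inclusion--exclusion over the flats of $\mathcal{A}$ exactly as in the defining sum for $\chi_M$.

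Granting the identity, the theorem follows from: if $\delta_1,\delta_2$ are nef classes on an irreducible projective variety of dimension $d$ over an algebraically closed field, then $i\mapsto\delta_1^{\,i}\cdot\delta_2^{\,d-i}$ is log-concave. By continuity (nef classes are limits of ample ones) it suffices to treat ample classes, and cutting $X$ down by $d-2$ general very ample divisors reduces to the case of a surface, where $(\delta_1\cdot\delta_2)^2\ge(\delta_1^2)(\delta_2^2)$ is the Hodge index theorem. Applying this with $\delta_1=\alpha$, $\delta_2=\beta$ on $X$ gives $(\mu^i)^2\ge\mu^{i-1}\mu^{i+1}$.

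I expect the main obstacle to be the identity $\mu^i=\alpha^{\,i}\cdot\beta^{\,r-1-i}$: one must pin down the correct map $\varphi$, check that $X$ is irreducible and that $\alpha,\beta$ are genuinely base-point-free, and carry out the intersection count so that it reproduces the inclusion--exclusion formula for $\chi_M$. A secondary difficulty, and the reason the result needs more than the original characteristic-zero argument of Huh, is that in positive characteristic one loses generic smoothness and the naive gradient map can degenerate, so the variety $X$ (or the map $\varphi$) must be chosen to behave uniformly in all characteristics; the Hodge index step itself is characteristic-free.
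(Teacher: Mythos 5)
First, note that the paper itself does not prove this statement: Theorem~\ref{Theorem:KatzHuh} is imported verbatim from Huh--Katz and used as a black box, so there is no internal proof to compare against. Judged against the actual source, your outline is the right one: the reductions (to loopless, simple, connected matroids over an algebraically closed field, using multiplicativity of $\chi$ and closure of log-concave positive sequences under convolution) are correct, and the two pillars --- realise the coefficients $\mu^i$ of $\bar\chi_M$ as intersection numbers $\alpha^i\cdot\beta^{r-1-i}$ of nef classes on an irreducible projective variety, then apply the Khovanskii--Teissier inequality, whose reduction to the Hodge index theorem on a surface is characteristic-free --- are exactly the architecture of Huh's and Huh--Katz's arguments.

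The genuine gap is the one you flag yourself: the identity $\mu^i=\alpha^i\cdot\beta^{r-1-i}$ is the entire content of the theorem, and ``a local computation at a generic point \ldots organised by inclusion--exclusion over the flats'' is not a proof of it, nor is the variety $X$ actually constructed outside characteristic zero. In characteristic zero Huh proves the identity via the graph of the gradient map of $\prod_e\ell_e$ and Chern--Schwartz--MacPherson classes / generic hyperplane sections of the arrangement complement, which relies on generic smoothness; this is precisely what breaks in characteristic $p$. Huh--Katz repair it not by modifying the gradient map but by changing the ambient construction: they take the closure in $\mathbb{P}^{n}\times\mathbb{P}^{n}$ (with $n=\abs{E}-1$, not $r-1$) of the graph of the standard Cremona involution restricted to the projectivised row space of the realisation, and compute its bidegrees tropically via the Bergman fan of $M$, where the inclusion--exclusion over flats is carried out combinatorially. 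Without either of these inputs your argument establishes only the (standard) implication ``bidegree interpretation $\Rightarrow$ log-concavity''; the theorem itself remains unproved. If the intent is to use the result as the paper does, the honest move is to cite it; if the intent is to reprove it, the missing identity must be supplied.
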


It is easy to see that log-concavity of $\bar\chi_M(q)$ implies log-concavity of $\chi_M(q)$.
We are interested in the \emph{$f$-polynomial} of the matroid given by
\begin{align}
f_M(q) = T_M(1+q, 1) &= %
  \sum_{%
   A \in \Delta } q^{r - \rank(A)} = \sum_{i=0}^r f_i q^{r-i}.
\end{align}

\section{Free (Co-)Extensions%
}
In this section we introduce free (co-)extensions of matroids.
 This helps us to establish a connection between the characteristic polynomial 
 and the $f$-polynomial. In conjunction with Theorem~\ref{Theorem:KatzHuh},
 this connection implies log-concavity of the $f$-polynomial of realisable matroids.
\label{Section:Extensions}
\begin{Definition}
 Let $M=(E,\Delta)$ be a matroid of rank $r$ and let $e\not \in E$. 
 The \emph{free extension} of $M$ (by $e$) is the matroid  $M + e=(E\cup \{e\}, \Delta + e)$, where
 \begin{align}
    \Delta + e :=  \Delta \cup \{ (I\cup \{e\}) : I \in \Delta \text{ and } \abs{I}\le r-1 \} .
 \end{align}
 \end{Definition}
 Several properties of the free extension are described in \cite[7.3.3.~Proposition]{brylawski-1986}.
\begin{Remark}
 \label{Remark:ExtensionRealization}
 If $M$ is realised over the field $\K$ by the list of vectors $X \subseteq \K^r$, then $M + e$  is realised by 
 the list $(X, x)$, where $x\in \K^r$ is a vector
  that is not contained in any (linear) hyperplane spanned by the vectors in $X$. If $\K$ is a finite field, such a vector might not exist.
  However, if $M$ is realisable over the field $\K$, it is also realisable over the infinite field $\K(t)$ 
 of rational functions in $t$ with coefficients in~$\K$.
\end{Remark}
Recall that the \emph{dual matroid} $M^*=(E,\Delta^*)$ is given by 
\begin{align}
  \Delta^* = \{ A : 
   \rank(E\setminus A)=r
  \} .
\end{align}
The dual matroid has rank $ r^* = \abs{E}-r$ and its rank function is given by $\rank^*(A)= \abs{A} + \rank(E\setminus A) - r$.
The Tutte polynomial satisfies $T_M(x,y)=T_{M^*}(y,x)$.
We will use  the \emph{free coextension} $M \times e$ of a matroid $M$ which is defined as
\begin{align}
  M \times e := (M^* + e )^*.
\end{align}
Equivalently, the free coextension of $M$ is the extension by a non-loop $e$ which is contained in every
dependent flat \cite[Section 7.3]{MatroidTheory-Oxley}.

\begin{Proposition}
\label{Proposition:IndependenceCharacteristic}
Let $M$ be a matroid of rank $r$  and let $M \times e$ denote its free coextension. Then,
 \begin{align}
  (-1)^{r+1} \chi_{M \times  e}(-q) &= %
  (1 + q)f_M(q)    .
 \end{align}
\end{Proposition}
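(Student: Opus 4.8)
The plan is to express both sides of the claimed identity in terms of Tutte polynomial evaluations and reduce everything to a single contraction--deletion-type relation for the free extension. First I would translate the statement into the language of the Tutte polynomial. By definition $f_M(q) = T_M(1+q,1)$, and the characteristic polynomial of the free coextension is $\chi_{M\times e}(t) = (-1)^{r+1} T_{M\times e}(1-t,0)$ since $M\times e$ has rank $r+1$. Using $M\times e = (M^*+e)^*$ and the duality $T_{N^*}(x,y)=T_N(y,x)$, this becomes $\chi_{M\times e}(t) = (-1)^{r+1} T_{M^*+e}(0,1-t)$. Substituting $t=-q$, the left-hand side $(-1)^{r+1}\chi_{M\times e}(-q)$ equals $T_{M^*+e}(0,1+q)$. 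So the whole proposition is equivalent to the clean identity
\begin{align}
 T_{M^*+e}(0,1+q) = (1+q)\, T_M(1+q,1). \notag
\end{align}

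Next I would compute $T_{M^*+e}$ in terms of $T_{M^*}$. The key tool is a formula for the Tutte polynomial of a free extension. Since $e$ is a non-loop, non-coloop element of $N+e$ for any $N$ with a non-empty ground set, one has the deletion--contraction relation $T_{N+e}(x,y) = T_{(N+e)\setminus e}(x,y) + T_{(N+e)/e}(x,y) = T_N(x,y) + T_{(N+e)/e}(x,y)$. The contraction $(N+e)/e$ is, up to loops that do not affect the Tutte polynomial, a matroid closely related to $N$: contracting the freely added point $e$ identifies all the hyperplanes, and in fact $(N+e)/e$ is the "truncation'' type operation — more precisely one checks from the definition of $\Delta+e$ that $(N+e)/e$ has as its independent sets exactly the sets $I$ with $I\in\Delta(N)$ and $|I|\le r(N)-1$, i.e. it is the truncation $T(N)$ of $N$ by one. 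I would then use (or re-derive) the standard fact that the Tutte polynomial of the truncation, evaluated suitably, relates back to $T_N$; in fact there is a well-known identity $T_{N+e}(x,y) = y\, T_N(x,y) + (\text{terms})$ when evaluated along $x=1$, but to be safe I would instead just iterate deletion--contraction directly at the subset-sum level. The cleanest route is probably the rank-generating form: write $T_{M^*+e}(0,1+q) = \sum_{A\subseteq E\cup\{e\}} (-1)^{r^* - \rho(A)} q^{|A|-\rho(A)}$ where $\rho$ is the rank function of $M^*+e$, split the sum over $A$ according to whether $e\in A$, and identify $\rho$ on each piece with $\rank^*$ on $E$, carefully handling the one case $|A\setminus\{e\}| = r^*$ where adding $e$ does not increase the rank (this is where the extra factor $q$ appears). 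Summing the two pieces and re-expressing via $\rank$ on $M$ using $\rank^*(B) = |B| + \rank(E\setminus B) - r$ should collapse to $(1+q)\sum_{A\in\Delta} q^{r-\rank(A)} = (1+q)f_M(q)$.

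The main obstacle I anticipate is bookkeeping the rank function of the free extension: for $A \subseteq E\cup\{e\}$ with $e\in A$, we have $\rho(A) = \rank^*(A\setminus\{e\})$ if $A\setminus\{e\}$ spans $M^*$ (i.e. has rank $r^*$), and $\rho(A) = \rank^*(A\setminus\{e\}) + 1$ otherwise. Getting the signs $(-1)^{r^*-\rho(A)}$ and the exponents $|A| - \rho(A)$ exactly right in both cases, and then verifying that the "spanning'' contributions are precisely what supplies the factor $q$ rather than $1$, is the delicate computational heart of the argument. An alternative that sidesteps some of this is to invoke the formula for $T$ of a free extension from \cite[7.3.3.~Proposition]{brylawski-1986} directly, and then it is a matter of a single substitution and a duality swap; I would present the self-contained subset-sum computation as the main line and remark that the identity for free extensions is classical. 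Once the displayed Tutte identity $T_{M^*+e}(0,1+q) = (1+q)T_M(1+q,1)$ is established, the proposition follows immediately by unwinding the definitions as in the first paragraph.
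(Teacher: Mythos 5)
Your proposal is correct and follows essentially the same route as the paper: both reduce the claim via duality to the identity $T_{M^*+e}(0,1+q)=(1+q)T_M(1+q,1)$ and then evaluate the rank-generating sum for $T_{M^*+e}$ by splitting over whether $e\in A$, using the fact that the freely added $e$ raises the rank of every non-spanning set (so those pairs of terms cancel) while the spanning sets contribute the factor $1+q$. The only cosmetic difference is that the paper recognises the surviving sum as $T_{M^*}(1,1+q)=T_M(1+q,1)$ rather than converting ranks back to $M$ by hand, but this is the same argument.
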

\begin{proof}
For the proof of this statement, we use the fact that both the characteristic polynomial and the $f$-polynomial are evaluations
 of the Tutte polynomial.
 Note that the matroid $M \times  e$ has rank $r+1$. 
 To simplify notation, the rank functions of $M^*$ and $M^* + e$ are both denoted by $\rank^*$.
\begin{align}
  (-1)^{r+1}\chi_{M \times  e} (-q) &= T_{M  \times  e}(1+q,0) = T_{M^* + e}(0,1+q) \\
  &= \sum_{A\subseteq E \cup\{ e \} } (-1)^{ r^* - \rank^*(A)} q^{\abs{A}-\rank^*(A)} \\
\begin{split}
  &= \sum_{A\subseteq E} \Bigl((-1)^{ r^* - \rank^*(A)} q^{\abs{A}-\rank^*(A)} 
      \\ &\qquad\qquad\qquad\quad 
      + (-1)^{ r^* - \rank^*(A \cup e)} q^{\abs{A}+1 - \rank^*(A\cup e)} \Bigl) 
      \label{eq:IndependenceCharacteristicA}
\end{split}
      \\
  &= (1+q)\sum_{\substack{A\subseteq E\\\rank^*(A)=r^*}}  q^{\abs{A} - r^*} \label{eq:IndependenceCharacteristic}
  = (1+q) T_{M^*}(1,1+q)  \\
 &=  (1+q) T_{M}(1+q,1 )  %
  = (1+q) f_{M}(q)
\end{align}
\eqref{eq:IndependenceCharacteristic} is equal to \eqref{eq:IndependenceCharacteristicA}
 because $\rank^*(A)<r^*$ implies $\rank^*(A \cup e)= \rank^*(A)+1$. For those $A$, the summands vanish.
\end{proof}

\begin{Remark}
Proposition~\ref{Proposition:IndependenceCharacteristic} has been discovered more than thirty years ago by Tom Brylawski.
It has appeared implicitly in \cite{brylawski-1977} and explicitly in
\cite[Remark 6.15.3c]{brylawski-1980}.
 It seems however that this result
  has not been widely known in the community. It was for example overlooked by Huh and Katz. The author
  rediscovered it independently.
  We will give another proof in Section~\ref{Section:ZonotopalAlgebra}.
\end{Remark}

\begin{proof}[Proof of Theorem~\ref{Theorem:fvectorLogConcave}]
Combine Proposition~\ref{Proposition:IndependenceCharacteristic} and Theorem \ref{Theorem:KatzHuh}.
Bear in mind that free coextensions of realisable matroids are realisable (cf.~Remark~\ref{Remark:ExtensionRealization}).
\end{proof}

\begin{Example}
 We consider the uniform matroid $U_{2,6}$, \ie the matroid on six elements where every set
 of cardinality at most two is independent. 
 Note that $U_{2,6}  \times  e = (U_{4,6} + e)^* = U_{4,7}^* = U_{3,7} $. 
\begin{align*}
  f_{U_{2,6}}(q) &=  q^2 + 6q + 15 \\
  (-1)^3\chi_{U_{3,7}}(-q) &=  q^3 + 7q^2 + 21q + 15  = (q+1)f_{U_{2,6}}(q) \\
\end{align*}
\end{Example}

\section{$h$-vectors, $f$-vectors, and strict log-concavity}
\label{Section:Supplementary}
This section contains some  results on connections between (strict) log-concavity of
$h$-vectors and $f$-vectors and the matroid operation thickening.
In Subsection~\ref{Subsection:hvectors} we show that 
 log-concavity of $h$-vectors implies strict log-concavity of $f$-vectors.
 In Subsection~\ref{Subsection:Thickenings} we  show that strict log-concavity
 of $f$-vectors implies strict log-concavity of $h$-vectors of certain thickenings 
 of a matroid. 
 In Subsection~\ref{Subsection:Modes} we  discuss possible locations of the modes of $f$-vectors.

As one might expect, a sequence of real numbers is called \emph{strictly log-concave} 
if it is log-concave and all inequalities are strict.
\subsection{$h$-vectors and strict log-concavity}
\label{Subsection:hvectors}
In this subsection we show that log-concavity of $h$-vectors implies
  strict log-concavity of $f$-vectors.
 The former was shown  recently by June Huh 
 in the case of matroids that are realisable over a field of characteristic zero
\cite{huh-hvector-2012}. 
The fact that %
$f$-vectors 
 of a large class of matroid complexes are strictly log-concave 
 indicates that they
 might satisfy even stronger inequalities as Mason conjectured.
\begin{Definition}
\label{Definition:hpolynomial}
Let $M$ be a matroid of rank $r$. Its \emph{$h$-vector} $(h_0,\ldots, h_r)$  consists of the 
 coefficients of the \emph{$h$-polynomial}
 defined by the equation $h_M(q) = \sum_{i=0}^r h_i q^{r-i} = f_M(q-1)$, \ie
\begin{align}
   h_j = \sum_{i = 0 }^j (-1)^{j-i} \binom{r- i}{j-i} f_i \quad \text{for } i=0,\ldots, r.
 \label{equation:hvector}
 \end{align}
\end{Definition}
It is well-known that 
log-concavity of $h$-vectors implies log-concavity of $f$-vectors
(see \cite[Corollary~8.4]{brenti-1994}, \cite[Proposition~6.13]{brylawski-1980}, \cite{dawson-1984}).
In fact, it implies even strict log-concavity of $f$-vectors.
This is a consequence of the following lemma.

\begin{Lemma}
 \label{Lemma:himpliesfLC}
Let $a_0,\ldots, a_r$ be non-negative integers and $a_0\neq 0$.
 Suppose that the polynomial  $a(q)=\sum_{i=0}^r a_i q^{r-i}$ is log-concave. Then, the
 polynomial $ b(q) = \sum_{i=0}^r b_i q^{r-i} = a(q+1)$ is \emph{strictly} log-concave.
\end{Lemma}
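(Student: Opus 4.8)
The plan is to express the coefficients $b_i$ of $b(q) = a(q+1)$ directly in terms of the $a_j$, and then verify the log-concavity inequality $b_i^2 > b_{i-1}b_{i+1}$ by exploiting the known inequalities $a_j^2 \ge a_{j-1}a_{j+1}$ together with positivity. Since $a(q) = \sum_{j=0}^r a_j q^{r-j}$, substituting $q \mapsto q+1$ and collecting powers of $q$ gives $b_i = \sum_{k \ge 0} \binom{r-i+k}{k} a_{i-k}$ (with the convention $a_m = 0$ for $m < 0$). In particular $b_r = a(1) = \sum_j a_j > 0$ because $a_0 \neq 0$ and all $a_j \ge 0$, and more generally every $b_i > 0$ for $i$ in the relevant range, since $b_i$ is a non-negative combination that includes the term $\binom{r-i+i}{i}a_0 = \binom{r}{i}a_0 > 0$. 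This positivity is what upgrades the conclusion to \emph{strict} log-concavity.

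First I would record the algebraic identity for the $b_i$ and the positivity statement just described. The second and main step is the inequality itself. A clean route is to use the fact that log-concavity of a polynomial with non-negative coefficients is preserved under multiplication by $(1+q)$, or — applied $r$ times — the statement that if $a(q)$ is log-concave with non-negative coefficients then so is $(1+q)a(q)$; iterating, $a(q)(1+q)^{?}$ is not quite what we want, so instead I would argue via the following standard fact: a polynomial $p(q)$ with positive coefficients that has only real (necessarily negative) roots is ultra-log-concave, hence strictly log-concave. However, $a(q)$ need not have real roots, so this is \emph{not} available here. Instead, the honest approach is a direct estimate: write the desired inequality $b_i^2 - b_{i-1}b_{i+1} > 0$, expand both sides using the binomial-sum formula for the $b$'s, and show the resulting expression is a sum of non-negative terms, at least one of which is strictly positive. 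The non-negativity of the cross terms should follow by pairing them off and invoking $a_j a_\ell \ge$ (products closer to the diagonal), i.e. the consequence of log-concavity that $a_j a_\ell \le a_{j'} a_{\ell'}$ whenever $j+\ell = j'+\ell'$ and $\{j',\ell'\}$ is "more central"; the strict term comes from the contribution of $a_0$.

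The cleaner way to organise this, and the one I would actually write, is inductive on the degree shift: it suffices to prove the single-step claim that if $c(q) = \sum c_i q^{n-i}$ has non-negative coefficients and is log-concave, then $d(q) = (q+1)c(q)$ is log-concave, and moreover $d$ is \emph{strictly} log-concave as soon as, say, $c_0 > 0$ (so that consecutive coefficients of $d$ don't both vanish and the relevant Newton-type inequality is strict). Writing $d_i = c_i + c_{i-1}$, the inequality $d_i^2 \ge d_{i-1}d_{i+1}$ becomes $(c_i + c_{i-1})^2 \ge (c_{i-1}+c_{i-2})(c_{i+1}+c_i)$, which rearranges to $c_i^2 - c_{i-1}c_{i+1} + c_{i-1}^2 - c_{i-2}c_i + (c_ic_{i-1} - c_{i-2}c_{i+1}) \ge 0$; the first two differences are $\ge 0$ by log-concavity of $c$, and the last term $c_ic_{i-1} - c_{i-2}c_{i+1} \ge 0$ follows from log-concavity as well (multiply $c_i^2 \ge c_{i-1}c_{i+1}$ and $c_{i-1}^2 \ge c_{i-2}c_i$ and take square roots, or argue directly that $c_{i-2}c_{i+1} \le c_{i-1}c_i$ from $c_{i-1}/c_{i-2} \ge c_i/c_{i-1} \ge c_{i+1}/c_i$ when the denominators are positive, handling the degenerate zero cases separately). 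Since $a(q+1)$ is obtained from $a(q)$ by multiplying by $(q+1)^0$... — more precisely, the substitution $q \mapsto q+1$ does \emph{not} literally factor through multiplication by $(1+q)$, so I would instead apply the above to $a(q)$ directly with the explicit binomial formula rather than an induction; the one-step lemma is the model computation and the general case is the same bookkeeping with $\binom{r-i+k}{k}$ weights.

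I expect the main obstacle to be exactly this bookkeeping: verifying that after expanding $b_i^2 - b_{i-1}b_{i+1}$ in the binomial-weighted sums, everything reorganises into manifestly non-negative pieces, with the $a_0^2$-type term surviving to force strictness. The conceptual content is small — it is entirely the inequalities $a_j^2 \ge a_{j-1}a_{j+1}$ plus $a_j \ge 0$ plus $a_0 > 0$ — but the combinatorial identity manipulation needs care, particularly tracking which coefficient products are "on the diagonal" and ensuring no negative term is left unmatched. A secondary subtlety is the possibility that some $a_j = 0$ in the interior of the sequence (log-concavity with a zero in the middle forces a tail of zeros), which must be handled so that the ratio arguments above remain valid; this is a routine case distinction but worth stating explicitly.
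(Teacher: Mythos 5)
Your proposal has a genuine gap: the central inequality is never verified. You set up the correct binomial expression $b_i=\sum_{j\le i}\binom{r-j}{i-j}a_j$ and correctly identify the only available tool (for $j+\ell=j'+\ell'$ with $\{j',\ell'\}$ more central, $a_ja_\ell\le a_{j'}a_{\ell'}$), but the claim that $b_i^2-b_{i-1}b_{i+1}$ ``reorganises into manifestly non-negative pieces'' is exactly the content of the lemma, and you defer it as bookkeeping rather than carrying it out. Grouping the expansion by $s=j+\ell$ leaves coefficients $\binom{r-j}{i-j}\binom{r-\ell}{i-\ell}-\binom{r-j}{i-1-j}\binom{r-\ell}{i+1-\ell}$ that are not individually non-negative, so a non-trivial rearrangement (an Abel-summation/majorisation argument along each antidiagonal) is required, and nothing in the proposal supplies it. Worse, the ``model computation'' you offer as evidence that the bookkeeping will work contains a false claim: multiplying a log-concave $c$ with $c_0>0$ by $(q+1)$ does \emph{not} in general produce a strictly log-concave polynomial. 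For the geometric sequence $c=(1,2,4,8)$ one gets $d=(1,3,6,12,8)$ with $d_2^2=36=d_1d_3$. What is true, and what the paper uses, is that multiplication by $(q+1)$ preserves log-concavity and preserves \emph{strict} log-concavity; it does not upgrade the former to the latter.

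You also came within a step of the paper's actual argument and turned away from it. You note correctly that $q\mapsto q+1$ does not factor through a single multiplication by $(q+1)$, but via Horner's scheme it does factor through alternating steps of ``multiply by $(q+1)$'' and ``add a non-negative constant'': with $a^k(q)=\sum_{i=0}^k a_iq^{k-i}$ and $b^k(q)=a^k(q+1)$ one has $b^{k+1}(q)=(q+1)b^k(q)+a_{k+1}$. The paper inducts on $k$; the multiplication step is handled by the (correct) preservation statement above, and the whole difficulty is concentrated in the single new inequality created by adding the constant $a_{k+1}$, namely $b_{k,k}^2>b_{k-1,k}\,a_{k+1}$, i.e.\ $(a_0+\dots+a_k)^2>\sum_{j=0}^{k-1}(k-j)a_ja_{k+1}$, proved by bounding each $a_ja_{k+1}$ by $a_{j+i}a_{k+1-i}$ and absorbing the result into the expanded square, with the surviving $a_0^2\ge 1$ giving strictness. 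That inequality, or an equivalent substitute, is the missing idea in your proposal.
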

\begin{proof}
Our proof is inspired by Dawson's proof in \cite{dawson-1984}.
For $0\le k \le r$, we define 
 $a^k(q)= \sum_{i=0}^k a_i q^{k-i}$ and $b^k(q)=\sum_{i=0}^k b_{i,k} q^{k-i} =a^k(q+1)$.

 The polynomials $a^k(q)$ are by construction log-concave. 
 We show by induction
 over $k$ that this implies log-concavity of the polynomials $b^k(q)$. This is sufficient since $b(q)=b^r(q)$.

For $k\le 1$, nothing needs to be shown. For $k=2$, we need to check one inequality:
 \begin{align}
  b_1^2 &= (a_1 + 2a_0)^2 =  a_1^2 + 4 a_0a_1 + 4a_0^2  \\
        &\ge a_0a_2 + 4 a_0a_1 + 4a_0^2  > a_0 (a_2 + a_1 + a_0) = b_0 b_2. 
 \end{align}

 Now let $k\ge 3$. 
  Note that 
 \begin{equation*}
   b^{k+1}(q) = a^{k+1}(q+1) = (q+1)a^k(q+1)+a_{k+1} = (q+1)b^k(q) + a_{k+1}.
 \end{equation*}
This polynomial is strictly log-concave if $ (q+1)(qb^k(q) + a_{k+1}) =  q((q+1)b^k(q) + a_{k+1}) + a_{k+1}$ is, since
 setting the $q^0$ coefficient to zero followed by a division by $q$ preserves strict log-concavity.

It is an easy exercise to show that multiplication by $(q+1)$ preserves strict log-concavity of a polynomial in $q$. Hence,
it is sufficient to prove that $(qb^k(q) + a_{k+1})$ is strictly log-concave. 
By induction, we  only need to check the inequality involving the term $a_{k+1}$, \ie $b_{k,k}^2 > b_{k-1,k}a_{k+1}$:
 \begin{align}
 \label{equation:LCfirstLine}
   b_{k,k}^2 - b_{k-1,k}a_{k+1}  &= (a_0+\ldots + a_k)^2 -
                  \sum_{j=0}^{k-1}(k-j)a_j a_{k+1} \\
 \label{equation:LCsecondLine}
               & \ge (a_0+\ldots + a_k)^2  -
                  \sum_{j=0}^{k-1} \sum_{i=1}^{k-j}a_{j+i} a_{k+1-i} \\
               &= \sum_{i+j \le k} a_ia_j \ge a_0^2 \ge 1 .
 \end{align} 
To see that \eqref{equation:LCfirstLine} is greater than \eqref{equation:LCsecondLine},
 note that log-concavity of the $a_j$ implies $a_ja_{k+1} \le a_{j+i}a_{k+1-i}$ for $1 \le i \le k-j$.
\end{proof}

In a  recent preprint, June Huh proved the following result about $h$-vectors of matroids
 that was conjectured by Jeremy Dawson in \cite{dawson-1984}.

\begin{Theorem}[\cite{huh-hvector-2012}]
\label{Theorem:Huhhvector}
 The $h$-vector of a matroid complex of a matroid that is realisable 
 over a field of characteristic zero is log-concave.
\end{Theorem}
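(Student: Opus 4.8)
The plan is to reduce Theorem~\ref{Theorem:Huhhvector} to Theorem~\ref{Theorem:KatzHuh} together with Proposition~\ref{Proposition:IndependenceCharacteristic}, in the same spirit as the proof of Theorem~\ref{Theorem:fvectorLogConcave} but keeping track of the finer combinatorial identities relating $h$-vectors to characteristic polynomials. First I would record the identity $h_M(q) = f_M(q-1)$ from Definition~\ref{Definition:hpolynomial} and combine it with Proposition~\ref{Proposition:IndependenceCharacteristic} applied to the free coextension $M\times e$. Since $(1+q)f_M(q) = (-1)^{r+1}\chi_{M\times e}(-q)$, substituting $q\mapsto q-1$ gives $q\, h_M(q) = q\, f_M(q-1) = (-1)^{r+1}\chi_{M\times e}(1-q)$; on the other hand $\chi_{M\times e}(1-q) = (1-q-1)\bar\chi_{M\times e}(1-q)\cdot(\text{sign})$, so after dividing by $q$ one obtains $h_M(q)$ as (up to sign) $\bar\chi_{M\times e}$ evaluated at a linear substitution, i.e. $h_M(q) = \pm\,\bar\chi_{M\times e}(1-q)$. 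The point is that the $h$-polynomial of $M$ is, up to reindexing, the reduced characteristic polynomial of the free coextension $M\times e$ read "backwards."

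Next I would invoke Theorem~\ref{Theorem:KatzHuh}: the coefficients of $\bar\chi_{M\times e}(q)$ form a log-concave sequence, since $M\times e$ is realisable over the same field as $M$ whenever that field is infinite, and a realisable matroid over a field of characteristic zero is in particular realisable over an infinite field (indeed over $\mathbb{Q}$ or a number field, and one may pass to $\K(t)$ as in Remark~\ref{Remark:ExtensionRealization}). The substitution $q\mapsto 1-q$ reverses the order of the coefficients and alternates signs, neither of which affects log-concavity of the sequence of absolute values; so log-concavity of $\bar\chi_{M\times e}$ transfers directly to log-concavity of $h_M(q)$. The non-negativity of the $h_i$ needed to state the result cleanly is standard for matroid complexes (they are shellable), so the sequence $(h_0,\ldots,h_r)$ is a genuine log-concave sequence of non-negative integers.

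The main obstacle I anticipate is not any deep argument but rather the bookkeeping: one must check that Proposition~\ref{Proposition:IndependenceCharacteristic} is applied to the correct matroid (the coextension, not the extension), that the degree shift from rank $r$ to rank $r+1$ is handled consistently, and that the extra factor of $(1+q)$ in Proposition~\ref{Proposition:IndependenceCharacteristic} becomes the factor of $q$ that one divides out to pass from $\chi$ to $\bar\chi$ after the substitution $q\mapsto q-1$. A secondary point requiring care is the realisability hypothesis: Theorem~\ref{Theorem:KatzHuh} as stated needs only realisability, but one should confirm that the characteristic-zero hypothesis in Theorem~\ref{Theorem:Huhhvector} is exactly what guarantees the coextension remains realisable (Remark~\ref{Remark:ExtensionRealization} shows realisability over an infinite field suffices, and characteristic zero fields are infinite). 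Once these identifications are pinned down, the proof is a one-line consequence of combining the two earlier results.
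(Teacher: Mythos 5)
Your proposal has a genuine gap, and it sits exactly at the step you describe as mere ``bookkeeping.'' The identity you derive is correct: substituting $q\mapsto q-1$ into Proposition~\ref{Proposition:IndependenceCharacteristic} and dividing by $q$ does give $h_M(q)=(-1)^r\,\bar\chi_{M\times e}(1-q)$. But the claim that the substitution $q\mapsto 1-q$ ``reverses the order of the coefficients and alternates signs'' is false. Reversal of coefficients is the substitution $q\mapsto 1/q$ (followed by multiplying by $q^{\deg}$); the affine substitution $q\mapsto 1-q$ instead acts on the coefficient vector by a binomial (Pascal) transform, which mixes all the coefficients. Concretely, in the paper's own example $M=U_{2,6}$ one has $\bar\chi_{M\times e}(q)=q^2-6q+15$ with absolute coefficient sequence $(1,6,15)$, while $h_M(q)=\bar\chi_{M\times e}(1-q)=q^2+4q+10$ with coefficients $(1,4,10)$ --- visibly not a reversal or sign flip. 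And log-concavity is not preserved by this transform: the shift $a(q)\mapsto a(q+1)$ preserves (indeed strengthens) log-concavity by Lemma~\ref{Lemma:himpliesfLC}, but you need the \emph{inverse} shift $a(q)\mapsto a(q-1)$, which does not. For instance $q^2+2q+2$ is log-concave but $(q-1)^2+2(q-1)+2=q^2+1$ is not. Non-negativity of the $h_i$ does not repair this.

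This failure is not incidental: if log-concavity of $\bar\chi$ formally implied log-concavity of the $h$-vector, then Dawson's conjecture would be an immediate corollary of Theorem~\ref{Theorem:KatzHuh}, whereas it required a separate and substantially harder argument. Indeed, the paper does not prove Theorem~\ref{Theorem:Huhhvector} at all --- it is quoted from Huh's paper \cite{huh-hvector-2012}, and the logical flow in Section~\ref{Subsection:hvectors} runs in the opposite direction to yours: the paper uses the (harder, external) $h$-vector result plus Lemma~\ref{Lemma:himpliesfLC} to deduce \emph{strict} log-concavity of the $f$-vector, precisely because $h$ log-concave $\Rightarrow$ $f$ log-concave is the implication that does hold, while $f\Rightarrow h$ is the one that fails.
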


Combining this theorem with %
  Lemma~\ref{Lemma:himpliesfLC}, we obtain the following corollary
 that slightly strengthens Theorem~\ref{Theorem:fvectorLogConcave} in the case of matroids that 
 are realisable over a field of characteristic zero.
\begin{Corollary}
  The $f$-vector of the matroid complex of a matroid that is realisable 
 over a field of characteristic zero is strictly log-concave.
\end{Corollary}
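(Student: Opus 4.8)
The plan is to derive the Corollary as an immediate consequence of Theorem~\ref{Theorem:Huhhvector} together with Lemma~\ref{Lemma:himpliesfLC}, so that no genuinely new argument is needed. Recall from Definition~\ref{Definition:hpolynomial} that $h_M(q) = f_M(q-1)$; substituting $q \mapsto q+1$ gives $f_M(q) = h_M(q+1)$. Hence, if I set $a(q) = h_M(q) = \sum_{i=0}^r h_i q^{r-i}$, then the polynomial $b(q) = a(q+1)$ appearing in the statement of Lemma~\ref{Lemma:himpliesfLC} is exactly $f_M(q)$, and its coefficients $b_i$ are the $f_i$. So once the hypotheses of the Lemma have been checked for the sequence $(h_0,\ldots,h_r)$, the Lemma delivers precisely the strict log-concavity of the $f$-vector.

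First I would verify those hypotheses, \ie that $h_0,\ldots,h_r$ are non-negative integers with $h_0 \neq 0$. Integrality is clear from~\eqref{equation:hvector}, since the $f_i$ are integers and the coefficients there are binomial coefficients. Non-negativity of the $h$-vector of a matroid complex is classical: matroid complexes are (pure) shellable, hence Cohen--Macaulay, and a pure shellable complex has a non-negative $h$-vector; alternatively one can invoke the interpretation of $h_i$ as the number of bases of internal activity $i$. Finally, comparing the coefficients of $q^r$ on the two sides of $h_M(q) = \sum_{i=0}^r f_i (q-1)^{r-i}$ shows $h_0 = f_0 = 1$, since only the $i=0$ summand contributes and the empty set is always independent; in particular $h_0 \neq 0$.

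With the hypotheses secured, the proof closes in one step: Theorem~\ref{Theorem:Huhhvector} asserts that the $h$-vector of the matroid complex of a matroid realisable over a field of characteristic zero is log-concave --- and this is the only place where the characteristic-zero assumption enters --- so $a(q) = h_M(q)$ is a log-concave polynomial of the shape required by Lemma~\ref{Lemma:himpliesfLC}, and the Lemma upgrades this to \emph{strict} log-concavity of $b(q) = h_M(q+1) = f_M(q)$, which is exactly the Corollary. I do not expect any substantive obstacle here: all the content sits in the two results already established, and the remainder is bookkeeping. The only points that genuinely need to be recorded, rather than computed, are the non-negativity of $h$-vectors of matroid complexes (which is what lets Lemma~\ref{Lemma:himpliesfLC} apply at all) and the normalisation $h_0 = f_0 = 1$ (which supplies the hypothesis $a_0 \neq 0$).
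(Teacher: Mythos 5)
Your proposal is correct and follows exactly the paper's route: the Corollary is obtained by applying Lemma~\ref{Lemma:himpliesfLC} with $a(q)=h_M(q)$ (so that $b(q)=h_M(q+1)=f_M(q)$) to the log-concave $h$-vector supplied by Theorem~\ref{Theorem:Huhhvector}. Your explicit verification of the Lemma's hypotheses (non-negativity of the $h$-vector and $h_0=1$) is a detail the paper leaves implicit, but it is the same argument.
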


\subsection{Thickenings}
\label{Subsection:Thickenings}
In this subsection we 
introduce the matroid operation $k$-fold thickening and we show
 that the $f$-vector of a ``sufficiently thick'' matroid is
 strictly log-concave if and only if its $h$-vector is.

\begin{Definition}
Let $M=(E,\Delta)$ be a matroid and let $k$ be a positive integer. 
We define the \emph{$k$-fold thickening $M^k$} of $M$ to be the matroid on the ground set $E\times\{ 1,\ldots, k\}$ 
 whose matroid complex is 
given by
\begin{align}
  \Delta^k = \{ I \subseteq E \times \{1,\ldots, k\} : \pi_E(I)\in \Delta \text{ and } \abs{\pi_E(I)}=\abs{I}  \}.
\end{align}
In this definition,
$\pi_E : E\times  \{1,\ldots, k\} \to E$ denotes the projection to $E$.
\end{Definition}
\begin{Remark}
If $M$ is realised by a list of vectors $X$, $M^k$ is realised 
 by the list $X^k$ that contains $k$ copies of every element of $X$.
\end{Remark}

\begin{Proposition}
 \label{Theorem:kThickeningHlc}
 Let $M=(E,\Delta)$ be a matroid of rank r and let $f_1$ denote the number of elements in $E$ that are not loops.
 Suppose that the $f$-vector of $M$ is strictly log-concave. 
 Then there exists an integer $k_0\le (f_1r)^{3r}$ \st
 for all  $k\ge k_0$, the $h$-vector of $M^k$, the $k$-fold thickening of $M$, is strictly log-concave.

 Put differently, for ``sufficiently thick'' matroids, the $f$-vector is strictly log-concave if and
 only if the $h$-vector is strictly log-concave. 
\end{Proposition}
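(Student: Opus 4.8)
The plan is to understand how the $h$-vector and $f$-vector of $M^k$ behave as $k\to\infty$ and to show that, after suitable normalisation, the $h$-vector inequalities converge to the (strict) $f$-vector inequalities. First I would compute the $f$-polynomial of $M^k$ in terms of that of $M$. Because a subset $I\subseteq E\times\{1,\dots,k\}$ lies in $\Delta^k$ exactly when it is a ``partial transversal'' whose projection is independent in $M$, a set of size $i$ projecting to a fixed independent $i$-set $A$ can be chosen in $k^i$ ways. Hence $f_i(M^k)=k^i f_i(M)$, i.e. $f_{M^k}(q)=k^{r}f_M(q/k)$ up to the obvious bookkeeping; more precisely $f_i(M^k)=k^i f_i$. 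This is the key structural fact and the first step.

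Next I would translate this into the $h$-vector via Definition~\ref{Definition:hpolynomial}: $h_{M^k}(q)=f_{M^k}(q-1)$. Writing $h_j(M^k)=\sum_{i=0}^j(-1)^{j-i}\binom{r-i}{j-i}k^i f_i$, one sees that $h_j(M^k)$ is a polynomial in $k$ of degree exactly $j$ with leading coefficient $f_j$ (the $i=j$ term) and lower-order terms governed by the $f_i$ with $i<j$. Therefore, as $k\to\infty$,
\begin{align*}
  \frac{h_j(M^k)^2}{h_{j-1}(M^k)\,h_{j+1}(M^k)}
  \longrightarrow \frac{f_j^2}{f_{j-1}f_{j+1}} > 1,
\end{align*}
using strict log-concavity of the $f$-vector of $M$ for the final inequality (and $f_0=1\neq0$, $f_j>0$ in the relevant range, so no denominator vanishes for $k$ large). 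Since there are only finitely many inequalities $j=1,\dots,r-1$ to check and each ratio tends to a limit $>1$, there is a threshold $k_0$ beyond which all of them hold strictly; this gives the ``if'' direction and, since $M$ being strictly log-concave is assumed, this is the substantive content. For the converse implication ``$h$-vector strictly log-concave $\Rightarrow$ $f$-vector strictly log-concave'' one invokes the easy direction recorded before Lemma~\ref{Lemma:himpliesfLC} (indeed Lemma~\ref{Lemma:himpliesfLC} gives even more), applied to $M^k$, and then notes that $f_i(M^k)=k^i f_i$ so strict log-concavity of $(f_i(M^k))$ is equivalent to that of $(f_i)$; this handles the ``only if'' half and justifies the ``put differently'' reformulation.

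The remaining task — and the part that takes real work rather than a limiting argument — is the explicit bound $k_0\le (f_1 r)^{3r}$. The hard part will be turning the qualitative ``for $k$ large enough'' into this clean effective estimate: one must bound the lower-order terms of the polynomials $h_j(M^k)$ in $k$ (using $f_i\le\binom{f_1}{i}\le f_1^{\,i}\le f_1^{\,r}$ and $\binom{r-i}{j-i}\le 2^r$) and bound below the ``gap'' $f_j^2-f_{j-1}f_{j+1}\ge 1$ coming from strict log-concavity of integers, then solve the resulting inequality $h_j(M^k)^2 - h_{j-1}(M^k)h_{j+1}(M^k) > 0$ for $k$. I expect that after collecting powers of $k$ and crude term-by-term estimates, a polynomial inequality of the shape $c\,k^{2j} > C\,k^{2j-1}$ with $c\ge 1$ and $C$ polynomially bounded in $f_1^{\,r}$ and $2^r$ emerges, and that $k\ge (f_1 r)^{3r}$ comfortably dominates $C/c$; writing this out carefully, while routine, is where one has to be a little careful with the constants to land exactly on the stated exponent $3r$.
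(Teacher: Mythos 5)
Your proposal is correct and follows essentially the same route as the paper: the identity $f_i(M^k)=k^if_i$, the observation that $h_j(M^k)$ is a polynomial in $k$ with leading term $k^jf_j$, and a leading-order comparison of $h_j(M^k)^2$ with $h_{j-1}(M^k)h_{j+1}(M^k)$ reduced to $f_j^2>f_{j-1}f_{j+1}$. The only divergence is in making $k_0$ effective: the paper bounds $f_i$ via Swartz's lower-bound theorem together with the multicomplex bound $h_r\le\binom{f_1-1}{r-1}$, whereas your direct bound $f_i\le\binom{f_1}{i}\le f_1^r$ is simpler and also lands within the stated $(f_1r)^{3r}$.
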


\begin{Remark}
 We expect that a careful analysis will yield an upper bound on $k_0$ that is a lot stronger.
\end{Remark}

\begin{Remark}
One should note that Proposition~\ref{Theorem:kThickeningHlc} holds for arbitrary matroids and
 even for other classes of simplicial 
 complexes that have positive $h$-vectors and that 
 are closed under $k$-fold thickening.
\end{Remark}

\begin{proof}[Proof of Proposition~\ref{Theorem:kThickeningHlc}]
 First, we observe the following connection between the $f$-polynomials of $M$ and $M^k$:
\begin{align}
  \label{eq:fKthickening}
  f_{M^k}(q) = \sum_{i=0}^r k^if_i q^{r-i} = k^r f_M \left(\frac qk \right).  
\end{align}
Let $(f_0,\ldots, f_r)$ denote the $f$-vector of $M$ and let $(h_0',\ldots, h_r')$ denote the 
$h$-vector of $M^k$. By \eqref{equation:hvector},
 $h_j' = \sum_{i = 0 }^j (-1)^{j-i} \binom{ r - i }{ j-i } k^i f_i$. Hence,
\begin{align}
(h_j')^2  &= \left(\sum_{i = 0 }^j (-1)^{j-i} \binom{r- i}{j-i} k^if_i\right)^2  
  = k^{2j} f_j^2 + o(k^{2j})
  \label{equation:hjsquared}\\  
 h_{j-1}'h_{j+1}' &=  \left(\sum_{i = 0 }^{j-1} (-1)^{j-i} \binom{r- i}{j-i-1} k^if_i\right)
   \left(\sum_{i = 0 }^{j+1} (-1)^{j-i} \binom{r- i}{j-i+1} k^if_i\right)
  \nonumber
  \\ 
 &= k^{2j} f_{j-1}f_{j+1} + o(k^{2j}).
 \label{equation:hjbla}
\end{align}
 Thus, for large $k$,
 $(h_j')^2 > h_{j-1}'h_{j+1}'$ is equivalent to $f_j^2 > f_{j-1}f_{j+1}$. 
 The latter inequality holds by assumption.

For the upper bound on $k_0$,  note that Ed Swartz proved in \cite{swartz-2005} that
\begin{align}
 f_i \le \sum_{j=0}^i \binom{r-j}{r-i}\left(\binom{r-1}{j}h_r + \binom{r-1}{j-1}\right).
 \label{equation:fVectorBound}
\end{align}
$h_r$ can be bounded above by the following argument:
the $h$-vector of a matroid complex is the $h$-vector of a multicomplex \cite[Theorem II.3.3]{stanley-1996}.
It follows directly from \eqref{equation:hvector} that $h_1=f_1-r$. Hence, 
$h_r \le \binom{f_1 -1}{r-1}$. Thus, we can deduce from \eqref{equation:fVectorBound} that
 $f_i\le r^{2i}f_1^r$. %
  Comparing this with \eqref{equation:hjsquared} and \eqref{equation:hjbla} 
 implies the upper bound.
\end{proof}

\begin{Remark}
 Jason Brown and Charles Colbourn
 showed %
 that every matroid has a 
 thickening \st its $h$-polynomial has only real zeroes \cite{brown-colbourn-1994}.
 This implies that it is log-concave. 
 Here, thickening denotes an operation where additional copies of 
 some elements of the ground set are added. 
 In contrast to the $k$-fold thickening, 
 the number of additional 
 copies can be different for every element.   
\end{Remark}

\subsection{Modes of $f$-vectors}
\label{Subsection:Modes}

For a unimodal sequence $f_0,\ldots, f_r$, it is interesting to find the location of its \emph{modes}, \ie 
 the element(s) where the maximum of the sequence is attained.

\begin{Remark}
  \label{Remark:ModeLocation}
  The index of the smallest mode of the $f$-vector of a rank $r$ matroid is at 
  least $\lfloor r/2\rfloor$.
  In fact, the first half of the $f$-vector  of every matroid is strictly monotonically increasing 
 \cite[7.5.1.~Proposition]{bjoerner-1992}. %
 The minimum $\lfloor r/2\rfloor$ is attained by the uniform matroid $U_{r,r}$. 
 Some matroids have monotonically increasing 
 $f$-vectors. It follows from \eqref{eq:fKthickening} that for an arbitrary matroid $M$ 
 and sufficiently large $k$, 
 the $f$-vector of the $k$-fold thickening of $M$ is strictly monotonically increasing.
\end{Remark}

\section{Zonotopal Algebra and matroid polynomials}
\label{Section:ZonotopalAlgebra}
\emph{Zonotopal algebra} is the study of several classes of graded vector spaces of polynomials that can be associated 
with a realisation of a matroid.
The Hilbert series of these spaces are matroid invariants.
 The spaces can be described in various ways and each space has a dual counterpart with the same Hilbert series.

The theory of zonotopal algebra was developed by Olga Holtz and Amos Ron \cite{holtz-ron-2011}, extending
 various previous results \eg on polynomial spaces spanned by box splines \cite{BoxSplineBook}.  
 The two zonotopal spaces that 
are of interest to us in this paper are the 
central $\Pcal$-space  $\Pcal(X)$ and the 
internal $\Pcal$-space $\Pcal_-(X)$.
These two spaces also have a natural interpretation in the theory of box splines (cf.~\cite[Corollary 9]{lenz-todd-2013}).

Given $x\in \K^r$, we denote by $p_x$ the linear polynomial in $\K[t_1,\ldots, t_r]$ whose $t_i$ 
 coefficient is the $i$th coordinate of the vector $x$, \eg for $x=(2,1)$, $p_x=2t_1 + t_2$. 
\begin{Definition}
Let $\K$ be a field %
and let $X=(x_1,\ldots, x_N)\subseteq \K^r$ 
 be a list of vectors spanning $\K^r$. 
We define the 
\emph{central $\Pcal$-space}   $\Pcal(X)$
 and the \emph{internal $\Pcal$-space}   $\Pcal_-(X)$ by
\begin{align}
\Pcal(X) &:=
 \spa \left\{  \prod_{ x\in Y } p_x : Y\subseteq X,\, \rank(X\setminus Y)=r  \right\} \\
\text{and  }  \Pcal_-(X) &:= \bigcap_{x\in X} \Pcal(X\setminus x).
\end{align}
\end{Definition}
The Hilbert series of these two spaces are 
 evaluations of the Tutte polynomial $T_X(x,y)$ of the matroid defined by $X$ 
 \cite{ardila-postnikov-2009,berget-2010,holtz-ron-2011}:
\begin{align}
 \hilb(\Pcal(X),q) &= q^{N-r} T_X(1, \frac 1q) 
 \label{eq:CentralPTutte}
  \\
\text{and } \hilb(\Pcal_-(X),q) &= q^{N-r} T_X(0, \frac 1q).
\end{align}
Let $X^*\in \K^{(N-r)\times r}$ denote a list of vectors realising the matroid dual to the matroid realised by $X$.
In the central case, we obtain 
\begin{align}
 \label{eq:CentralHilbAsTutteEval}
 q^{r} \hilb(\Pcal(X^*),\frac 1q) &=  T_X(q, 1) 
\end{align}
by dualising and by reversing the order of the coefficients.
In the internal case, we obtain 
\begin{align}
 \label{eq:InternalHilbAsTutteEval}
 q^{r} \hilb(\Pcal_-(X^*),\frac 1q) &=  T_X(q, 0) 
\end{align}
by dualising and by reversing the order of the coefficients.
By comparing \eqref{eq:CentralHilbAsTutteEval}
 and \eqref{eq:InternalHilbAsTutteEval} with the definitions in Section~\ref{Section:MatroidAndMatroidPolynomials}
 we obtain the following result. %
\begin{Proposition}
 Let $\K$ be a field  and let $X\subseteq \K^r$ be a list of vectors spanning $\K^r$.
 Then 
\label{Proposition:TutteFcharacteristicPolynomials}
 \begin{align}
   f_X(q) &=  T_X(q+1, 1) =  (q+1)^{r} \hilb(\Pcal(X^*),\frac 1{q+1})   \\
  \text{and } (-1)^r\chi_X(-q) &= T_X(q+1,0) = (q+1)^{r} \hilb(\Pcal_-(X^*),\frac 1{q+1}).
 \end{align}
\end{Proposition}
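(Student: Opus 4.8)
The plan is to verify the two displayed identities directly from the formulas for the Hilbert series of the zonotopal $\Pcal$-spaces and the definitions of $f_X$ and $\chi_X$ given in Section~\ref{Section:MatroidAndMatroidPolynomials}. The starting point is the pair of evaluations \eqref{eq:CentralHilbAsTutteEval} and \eqref{eq:InternalHilbAsTutteEval}, which already express $T_X(q,1)$ and $T_X(q,0)$ in terms of $\hilb(\Pcal(X^*),1/q)$ and $\hilb(\Pcal_-(X^*),1/q)$ respectively. Substituting $q \mapsto q+1$ into both sides immediately yields $T_X(q+1,1) = (q+1)^r \hilb(\Pcal(X^*),1/(q+1))$ and $T_X(q+1,0) = (q+1)^r \hilb(\Pcal_-(X^*),1/(q+1))$. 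So the only thing left is to match the left-hand sides with $f_X(q)$ and $(-1)^r\chi_X(-q)$.

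For the first identity, I would recall that $f_M(q) = T_M(1+q,1)$ by the definition of the $f$-polynomial in Section~\ref{Section:MatroidAndMatroidPolynomials}; since $X$ realises the matroid $M$, this is exactly $f_X(q) = T_X(q+1,1)$, which is the desired middle expression. For the second identity, I would use $\chi_M(q) = (-1)^r T_M(1-q,0)$; replacing $q$ by $-q$ gives $\chi_M(-q) = (-1)^r T_M(1+q,0)$, hence $(-1)^r\chi_X(-q) = T_X(q+1,0)$, again the middle expression. Combining these with the substituted forms of \eqref{eq:CentralHilbAsTutteEval} and \eqref{eq:InternalHilbAsTutteEval} finishes the proof.

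There is essentially no obstacle here: the proposition is a bookkeeping assembly of identities already collected in the section, and the ``real'' content — the combinatorial identities \eqref{eq:CentralPTutte}--\eqref{eq:InternalHilbAsTutteEval} relating Hilbert series of $\Pcal$-spaces to Tutte polynomial evaluations, together with the dualisation and coefficient-reversal steps — has already been established. The one point worth stating carefully is that $X$ spans $\K^r$, so that the matroid it realises has rank $r$ and the dual realisation $X^*$ exists with the stated dimensions; this is part of the hypothesis. I would therefore present the proof as a short chain of equalities, pointing to the definition of the characteristic and $f$-polynomials for the left-hand identifications and to \eqref{eq:CentralHilbAsTutteEval} and \eqref{eq:InternalHilbAsTutteEval} for the right-hand ones, with the substitution $q \mapsto q+1$ as the only manipulation.

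I would also remark at the end that this gives the promised second proof of Proposition~\ref{Proposition:IndependenceCharacteristic}: taking $X^*$ in place of $X^*$ appropriately (equivalently, applying the two identities above to a realisation of $M^* \times e$ or directly comparing the central and internal formulas), the factor $(q+1)f_M(q)$ reappears as $(-1)^{r+1}\chi_{M\times e}(-q)$, since the internal $\Pcal$-space of $X$ sits inside the central $\Pcal$-space and the quotient accounts for exactly the factor $(q+1)$. This is the zonotopal-algebra explanation of Brylawski's identity alluded to in the remark following Proposition~\ref{Proposition:IndependenceCharacteristic}.
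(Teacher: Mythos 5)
Your proof is correct and matches the paper's (the paper simply says the proposition follows ``by comparing \eqref{eq:CentralHilbAsTutteEval} and \eqref{eq:InternalHilbAsTutteEval} with the definitions in Section~\ref{Section:MatroidAndMatroidPolynomials}'', which is exactly the substitution-and-matching you carry out). The closing remark about rederiving Proposition~\ref{Proposition:IndependenceCharacteristic} is extraneous to this statement and slightly imprecise as phrased (the paper obtains that via Proposition~\ref{Proposition:InternalCentralFreeExtension}, i.e.\ $\Pcal_-(X,x)=\Pcal(X)$ for generic $x$), but it does not affect the proof of the proposition itself.
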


\begin{Example}
 \label{Example:ZonotopalAlgebra}
  Let $X=((1,0),(0,1),(1,1))$. $X$ realises the uniform matroid $U_{2,3}$ and $X^*=(1,1,1)$. The Tutte polynomial is
  $T_X(x,y)=x^2 +x + y$.
\begin{align*}
 \Pcal(X^*)&=\spa\{1,t,t^2\} &
 \Pcal_-(X^*)&=\spa\{1,t \} \\
 q^2\hilb(\Pcal(X^*),1/q) &= q^2 + q + 1  &
 q^2\hilb(\Pcal_-(X^*),1/q) &= q^2 + q  \\
 f_X(q)&=q^2 + 3q+3 &
 \chi_X(-q)&=q^2 + 3q+  2 
\end{align*}
\end{Example}

\begin{Proposition}
\label{Proposition:InternalCentralFreeExtension}
 Let $\K$ be a field %
 and let $X\subseteq \K^r$ be a list of vectors spanning $\K^r$.
 Let $x\in \K^r$ be generic, \ie $x$ is not contained in any (linear) hyperplane spanned
by the vectors in $X$. Then 
\begin{align}
 \Pcal_-(X,x) = \Pcal(X) .
\end{align}
\end{Proposition}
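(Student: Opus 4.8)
The plan is to prove the two inclusions separately, working from the characterisation of the internal space as $\Pcal_-(X,x) = \bigcap_{y \in (X,x)} \Pcal((X,x) \setminus y)$. First I would dispose of the easy direction, $\Pcal(X) \subseteq \Pcal_-(X,x)$. Note that $\Pcal_-(X,x) = \Pcal(X) \cap \bigcap_{y \in X} \Pcal((X,x)\setminus y)$, where the $y=x$ term in the intersection is exactly $\Pcal(X)$. So it suffices to show $\Pcal(X) \subseteq \Pcal((X \setminus y), x)$ for each $y \in X$. Since $x$ is generic, removing $y$ from $X$ and adjoining $x$ does not decrease the rank (the rank stays $r$), and in fact $(X\setminus y, x)$ realises a matroid in which $x$ "substitutes" for $y$; I would check that every spanning-complement monomial $\prod_{z \in Y} p_z$ with $\rank(X \setminus Y) = r$, $Y \subseteq X$, still lies in $\Pcal((X\setminus y),x)$. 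If $y \notin Y$ this is immediate since $Y \subseteq X \setminus y \subseteq (X\setminus y, x)$ and $\rank((X\setminus y,x)\setminus Y) = r$; if $y \in Y$, one rewrites using the genericity of $x$ (it is not in the hyperplane spanned by any corank-one subset), so that $(X \setminus Y) \cup \{x\}$ still spans and the monomial $\prod_{z \in Y} p_z$ can be matched against $Y' = (Y \setminus y) \cup \{x\}$ up to a degree-preserving manipulation — but since the $\Pcal$-space depends only on degrees of generators, $\prod_{z \in Y}p_z$ and $\prod_{z \in Y'} p_z$ both have degree $|Y|$, and I expect $\prod_{z\in Y}p_z \in \spa\{\prod_{z \in Y'}p_z\}$ is not literally true, so the cleaner route is to use the known formula $\hilb(\Pcal(X),q)=q^{N-r}T_X(1,1/q)$ together with the fact that $\Pcal$-spaces are nested under these operations; concretely one argues at the level of containment of spanning sets.

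For the reverse inclusion $\Pcal_-(X,x) \subseteq \Pcal(X)$, the cleanest argument is dimensional: I would compute $\dim \Pcal_-(X,x)$ and $\dim \Pcal(X)$ and show they are equal, which combined with the inclusion already established forces equality. By Proposition~\ref{Proposition:TutteFcharacteristicPolynomials} (or directly from \eqref{eq:InternalHilbAsTutteEval} and \eqref{eq:CentralPTutte}), $\dim \Pcal_-(X,x) = \hilb(\Pcal_-(X,x),1)$ is the evaluation $T_{(X,x)}(1,0)$ of a Tutte polynomial, and $\dim \Pcal(X) = T_X(1,1) = |\Delta|$, the number of independent sets of $M$. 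So the proposition reduces to the matroid-theoretic identity $T_{(X,x)}(1,0) = T_X(1,1)$ where $(X,x)$ is the free extension of the matroid realised by $X$. This is precisely (the dual of) the content already used in Proposition~\ref{Proposition:IndependenceCharacteristic}: free extension turns the "internal" evaluation $y=0$ into the "independence" evaluation $y=1$, because no new spanning-but-dependent configurations are created generically. Even better, one can invoke Proposition~\ref{Proposition:TutteFcharacteristicPolynomials} directly: $(-1)^r\chi_{(X,x)}(-q) = (q+1)^r\hilb(\Pcal_-((X,x)^*),1/(q+1))$, and one knows $(X,x)^* = X^* \times e$ is a free coextension, so comparison with $f_X(q) = (q+1)^r\hilb(\Pcal(X^*),1/(q+1))$ and Proposition~\ref{Proposition:IndependenceCharacteristic} already yields that the relevant Hilbert series agree; specialising $q \mapsto$ gives the dimension count. (Here one passes between $X$ and $X^*$ using $T_M = T_{M^*}$ with arguments swapped, as in Section~\ref{Section:ZonotopalAlgebra}.)

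The main obstacle is making the inclusion $\Pcal(X) \subseteq \Pcal_-(X,x)$ genuinely rigorous rather than hand-wavy: one must verify that every generator $\prod_{z \in Y}p_z$ of $\Pcal(X)$ survives into $\Pcal((X\setminus y),x)$ for every $y \in X$, which requires carefully using genericity of $x$ to show $(X\setminus Y)\cup\{x\}$ has full rank whenever $X \setminus Y$ has corank at most one, and then identifying $\prod_{z\in Y}p_z$ with a product over a legitimate spanning-complement of $(X\setminus y, x)$. I would structure this by a case split on whether $y \in Y$: the case $y \notin Y$ is trivial, and the case $y \in Y$ uses that $Y \setminus y$ has complement of corank at most one in $X$, hence corank zero in $X \cup \{x\}$ by genericity, so $(Y \setminus y) \cup \{x\}$ — wait, we need the complement to span, i.e. we need $\rank((X \setminus y, x) \setminus ((Y\setminus y)\cup x)) = \rank(X \setminus Y) = r$, which is false in general; the correct witness is instead that $\prod_{z \in Y} p_z$ with $|Y| = |Y|$ lies in $\Pcal((X\setminus y), x)$ because that space contains all products of degree $\le$ (number of elements) indexed by sets whose complement spans, and genericity guarantees enough such sets. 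Once this bookkeeping is pinned down, combining it with the dimension equality from the Tutte-polynomial identity completes the proof.
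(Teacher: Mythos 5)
Your proposal has the two inclusions wired the wrong way round, and the direction you end up relying on is exactly the one you cannot complete. The inclusion that is genuinely trivial is $\Pcal_-(X,x)\subseteq\Pcal(X)$: since $\Pcal_-(X,x)=\bigcap_{y\in(X,x)}\Pcal((X,x)\setminus y)$ and the factor for $y=x$ is $\Pcal((X,x)\setminus x)=\Pcal(X)$, the intersection sits inside $\Pcal(X)$ with no further work. You observe this decomposition, but then use it only to reduce the \emph{other} inclusion, $\Pcal(X)\subseteq\Pcal_-(X,x)$, to showing $\Pcal(X)\subseteq\Pcal((X\setminus y),x)$ for every $y\in X$ --- and that generator-by-generator argument is the part you yourself concede breaks down (``which is false in general'', ``once this bookkeeping is pinned down''). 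The paper avoids it entirely: trivial inclusion one way, then a dimension count to force equality. Your final assembly is ``dimension equality plus the inclusion already established'', but the inclusion you claim to have established is precisely the unproven one, so the argument as written has a genuine hole even though all the needed pieces appear in your text.

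The dimension count itself also needs repair. From $\hilb(\Pcal_-(X),q)=q^{N-r}T_X(0,1/q)$ one gets $\dim\Pcal_-(X,x)=T_{(X,x)}(0,1)$, the number of bases of $(X,x)$ with no internally active element, not $T_{(X,x)}(1,0)$; and $\dim\Pcal(X)=T_X(1,1)$ is the number of \emph{bases} of $X$, not the number $|\Delta|=T_X(2,1)$ of independent sets. The identity you reduce to, $T_{(X,x)}(1,0)=T_X(1,1)$, is false: take $X$ a basis of $\K^2$, so that $(X,x)$ realises $U_{2,3}$; then $T_{U_{2,3}}(1,0)=2$ while $T_X(1,1)=1$. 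The correct statement is $T_{(X,x)}(0,1)=T_X(1,1)$, which you can get either from Proposition~\ref{Proposition:IndependenceCharacteristic} via duality at $q=0$, or, as the paper does, combinatorially: a basis $B$ of $(X,x)$ has internal activity zero if and only if $x\notin B$, i.e.\ if and only if $B$ is a basis of $X$, so the two dimensions coincide. With that correction and with the trivial inclusion taken in the right direction, your dimension-based strategy becomes exactly the paper's proof.
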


\begin{proof}
By definition, %
 $\Pcal_-(X,x) = \bigcap_{y\in (X,x)} \Pcal((X,x)\setminus y)$.
This implies $\Pcal(X)\supseteq \Pcal_-(X,x)$.
Equality can be established by a dimension argument: in  
\cite{holtz-ron-2011}, it is shown that the dimension of $\Pcal(X)$ is equal to the number of bases that
 can be selected from $X$ and that the dimension of $\Pcal_-(X)$ equals the number of internal  bases in $X$, \ie bases that 
 have no internally active elements. It can easily be seen that $B\subseteq (X,x)$ 
 is an internal basis if and only if $B$ is a basis and $x\not\in B$.  
\end{proof}

\begin{Remark}
 Proposition~\ref{Proposition:TutteFcharacteristicPolynomials}
 and Proposition~\ref{Proposition:InternalCentralFreeExtension} imply 
 Proposition~\ref{Proposition:IndependenceCharacteristic} for realisable matroids.
 This is how the author rediscovered the connection between the characteristic polynomial and the $f$-polynomial. 
\end{Remark}

 \renewcommand{\MR}[1]{} %
\bibliographystyle{amsplain}

\providecommand{\bysame}{\leavevmode\hbox to3em{\hrulefill}\thinspace}
\providecommand{\MR}{\relax\ifhmode\unskip\space\fi MR }
\providecommand{\MRhref}[2]{%
  \href{http://www.ams.org/mathscinet-getitem?mr=#1}{#2}
}
\providecommand{\href}[2]{#2}

\end{document}